\pgfplotsset{compat=1.12}
\newcommand{\stkout}[1]{\ifmmode\text{\sout{\ensuremath{#1}}}\else\sout{#1}\fi}
\numberwithin{equation}{section}
\newtheoremstyle{break}{\topsep}{\topsep}{\itshape}{}{\bfseries}{.}{\newline}{}
\newtheoremstyle{exampl}{\topsep}{\topsep}{\upshape}{}{\bfseries}{.}{\newline}{}
\theoremstyle{plain}
\newtheorem{theorem}{Theorem}[section]
\newtheorem{lem}[theorem]{Lemma}  
\newtheorem{proposition}[theorem]{Proposition}  
\newtheorem{ass}[theorem]{Assumption}
\theoremstyle{definition}
\newtheorem{definition}[theorem]{Definition}
\newtheorem{example}[theorem]{Example}
\theoremstyle{remark}
\newtheorem{rem}[theorem]{Remark}
\def\F{\mathcal F}
\def\A{\mathcal A}
\def\E{\mathbb E}
\def\R{\mathbb{R}}
\def\I{\mathcal I}
\title{Dynkin ghost games with asymmetry and consolation}
\author{Erik Ekstr\"om\footnote{Department of Mathematics, Uppsala University, Sweden. 
		E-mail address: ekstrom@math.uu.se.} \and Yuqiong Wang\footnote{Department of Mathematics, Uppsala University, Sweden. 
		E-mail address: yuqiong.wang@math.uu.se.}}
\date{\today}
\begin{document}
	\maketitle

	\begin{abstract} 
We study a stopping game of preemption type between two players who both act under 
uncertain competition. In this framework we introduce, and study the effect of, (i) {\em asymmetry} of payoffs, allowing e.g. for different investments costs, and
(ii) {\em consolation}, i.e. partial compensation to the forestalled stopper. In general, 
this setting does not offer an explicit equilibrium. Instead, 
we provide a general verification theorem, which we then use  to explore various situations in which a solution can be constructed so that an equilibrium is obtained.
	\end{abstract}

	\noindent \textbf{Keywords:} 
	Dynkin game, uncertain competition, preemption game.
	\vspace{1mm}
	
	\noindent \textbf{AMS MSC2010:}   91A15 (60G40 60J60).

\section{Introduction} \label{sec:intro}

In many problems of strategic interaction in random environments, {\em incomplete} and {\em asymmetric} information are natural ingredients. For example, 
different agents may have access to different information regarding the profitability of a certain investment possibility,
and bidders in an auction may lack information about competitors'  private values. 
A particular case of incomplete/asymmetric information is that of \textbf{uncertain competition}. 
Such a feature is natural in, for example, many investment problems, for participants in online auctions where bidders are not aware of how many competitors they have, and in models of fraud detection.

In the current article we study optimal stopping games (Dynkin games) with uncertain competition. 
Such games were introduced in \cite{DE}, where it was assumed that two players can observe an underlying Markov process, and the one who stops first 
receives a payoff defined in terms of the underlying process. However, both players act under uncertain competition, and 
the urge to wait until the optimal (single-player) stopping time needs to be balanced against the risk of being forestalled by the other player (should they exist). 
In the main result of \cite{DE}, a Nash equilibrium is obtained in randomized stopping times using a guess-and-verify approach. 

Here we extend the framework of \cite{DE} to allow for 
(i) \textbf{asymmetric payoffs}, and (ii) \textbf{consolation} to the late stopper.
{\em Asymmetric payoffs} appear naturally in problems of investment timing, where different agents are subject to different investment costs. For example, 
the investment in a real option may incur larger hiring and operating costs for a small firm
than for a large one. Similarly, in problems of fraud detection, the different 
agents clearly have different objectives, leading naturally to asymmetric problems. The notion {\em consolation} refers to a situation in which 
a player receives a non-zero reward also when being forestalled.
This naturally arises in problems of investment timing with revenues depending on whether a monopoly or duopoly situation is obtained, so that the payoff of a particular player depends on the order of investment. We will assume that the consolation payoff is dominated by the payoff for the first stopper so that the game is still of preemption type.

\subsection{Literature review}

Stochastic dynamic games have been studied thoroughly by several authors, for example in connection with real option valuation and investment games in a duopoly setting (see e.g. \cite{DM}, \cite{DR}, \cite{Thi} and \cite{HW}). 
More recently, stochastic dynamic games with incomplete and asymmetric information have started to attract interest in the literature, see \cite{DGV}, \cite{G}, \cite{LP} and \cite{LM}.

A particular type of incomplete information is when 
there is uncertainty about the existence of competition.
For instance, the case when 
the existence of one player is known to the other player, but not vice versa, is relevant in studies of fraud detection; for a few studies along these lines, see \cite{BM}, \cite{DE}, \cite{EL}, \cite{ELO} and \cite{EMO}.
In particular, in \cite{DE} a symmetric optimal stopping game with uncertain competition and no consolation was studied. As mentioned above, the set-up of the current article extends the set-up in \cite{DE} to include for asymmetric payoff functions and consolation.

\subsection{Preview}
In Section~\ref{sec2} we provide a detailed formulation of the game under consideration, together with a few preliminary results.
Introducing asymmetric payoffs and consolation, the Dynkin ghost game becomes less tractable, and the full specification of an equilibrium is in general not 
possible.
In the absence of an explicit equilibrium we provide a verification result (Theorem~\ref{verif}), 
which specifies sufficient conditions for a candidate equilibrium to be an equilibrium.
In particular, the verification result requires the construction of two equilibrium value functions fulfilling certain martingale conditions. 
These martingale conditions can be translated into boundary conditions along stopping boundaries, and are thus instrumental when constructing candidate 
equilibrium values. In fact, some special cases are amenable to further analysis; Section~\ref{sec4} studies symmetric payoffs with consolation, and 
Section~\ref{sec7} explores asymmetric cases.

\section{Problem formulation}
\label{sec2}

To describe the game set-up, let $X$ be a continuous strong Markov process with state space $\mathcal I$, where $\mathcal I\subseteq\R$ is an interval, defined on a complete probability space $(\Omega, \mathcal{F},\mathbb P_x)$, where $\mathbb P_x(X_0=x)=1$.
We assume that $g_i, h_i:\mathcal I\to [0,\infty)$, $i=1,2$, are given  functions with $g_i\geq h_i$ and such that 
	\[\sup_{x\in \R} g_i(x)>0\]
(to rule out degenerate cases).
We denote by $\mathbb F:=(\mathcal F_t)_{t\geq 0}$ the augmentation of the smallest right-continuous filtration to which $X$ is adapted, and by $\mathcal T$ the set of $\mathbb F-$stopping times.
Given a constant discount rate $r\geq0$, let 
\[V^{g_i}(x):=\sup_{\tau\in\mathcal T}\E_x\left[e^{-r\tau}g_i(X_\tau)\right]\]
and 
\[V^{h_i}(x):=\sup_{\tau\in\mathcal T}\E_x\left[e^{-r\tau}h_i(X_\tau)\right]\]
be the value functions in the corresponding single-player games. 
Here (and in all similar expressions below) we use the convention that
$f(X_\tau)\mathds 1_{\{\tau=\infty\}}=0$ for a given function $f$. Also, let 
\[\tau_{g_i}:=\inf\{t\geq 0:V^{g_i}(X_t)\leq g_i(X_t)\}\]
and
\[\tau_{h_i}:=\inf\{t\geq 0:V^{h_i}(X_t)\leq h_i(X_t)\}\]
$i=1,2$.

\begin{ass}\label{ass}
We assume that the functions $g_i$, $V^{g_i}$ and $V^{h_i}$ are continuous. We also assume that the processes 
\[Y^{g_i}_t:=e^{-rt}V^{g_i}(X_t)\]
and
\[Y^{h_i}_t:=e^{-rt}V^{h_i}(X_t)\]
are supermartingales, and that the stopped versions 
$Y^{g_i}_{t\wedge\tau_{g_i}}$ and 
$Y^{h_i}_{t\wedge\tau_{h_i}}$ are uniformly integrable martingales.
\end{ass}

\begin{rem}
A consequence of the assumptions above is that $\tau_{g_i}$ is an
optimal stopping time for the single-player game with payoff function $g_i$.
A standard condition that guarantees the assumed supermartingality/martingality
properties is the integrability condition
\[\E_x\left[\sup_{t\geq 0}e^{-rt}g_i(X_t)\right]<\infty,\]
see, e.g., \cite{S}.
\end{rem}

The two players will be equipped with randomised stopping times, which we now define. 
Denote $\mathcal A$ the set of non-decreasing right-continuous $\mathbb{F}-$adapted processes $\Gamma=(\Gamma_t)_{0-\leq t<\infty}$ with values in $[0,1]$ and with $\Gamma_{0-}=0$.

\begin{definition} (Randomised stopping times.)
Let $U\sim \text{Unif}(0,1)$ be a random variable independent of $X$ (and also independent of the random variables $\theta_1$ and $\theta_2$ introduced below). Given $\Gamma\in\mathcal A$, 
the $(U,\Gamma)-$randomised stopping time $\gamma$ is defined as
	\[\gamma: = \inf\{t\geq0: \Gamma_t > U\}.\]
\end{definition}

Throughout the article we always assume with no further mentioning that two randomised stopping times $\gamma_1$ and $\gamma_2$ have independent randomisation devices $U_1$ and $U_2$.
Also, since the distribution of $\gamma$ only depends on $\Gamma$, we will also refer to the $(U,\Gamma)-$randomised stopping time as a $\Gamma-$randomised stopping time.

We will study a game where Player~$i$ has competition with probability $p_i\in(0,1]$, $i=1,2$, with $p_1\wedge p_2<1$.
To model this, let $\theta_1, \theta_2$ be Bernoulli random variables such that $\theta_1, \theta_2$ and $X$ are independent, and with
	\[\mathbb P_x(\theta_i=1)=p_i ,\]
$i=1,2$. We will let $\{\theta_i=1\}$ represent the event on which Player~$i$ has active competition.
To do that, if Player~$3-i$ uses a randomized stopping time $\gamma_{3-i}$, we define
\[\hat\gamma_{3-i}:=\left\{\begin{array}{ll}
\gamma_{3-i} & \mbox{on }\{\theta_{i}=1\}\\
\infty & \mbox{on }\{\theta_{i}=0\}.\end{array}\right.\]
The expected discounted payoff for Player $1$ is then defined as
	\[J_1(x;\gamma_1,\gamma_2) := \E_x[e^{-r \gamma_1} g_1(X_{\gamma_1})\mathds{1}_{\{\gamma_1< \hat \gamma_2\}}+e^{-r\gamma_2}V^{h_1}(X_{\gamma_2} )\mathds{1}_{\{\gamma_1\geq \hat\gamma_2 \}}],\] 
and the expected discounted payoff for Player 2 is defined as
	\[J_2(x;\gamma_1,\gamma_2) := \E_x[e^{-r \gamma_2} g_2(X_{\gamma_2})\mathds{1}_{\{\gamma_2\leq \hat\gamma_1\}}+e^{-r\gamma_1}V^{h_2}(X_{\gamma_1} )\mathds{1}_{\{\hat\gamma_1<\gamma_2 \}}].\]  
Note that in case of simultaneous stopping, Player 2 has priority and receives $g_2$, whereas Player~1 receives the consolation $V^{h_1}$; we refer to Player~$2$ as the advantaged player.

\begin{rem}
In case Player~$i$ is forestalled by their opponent, their (immediate) consolation payoff is specified by $V^{h_i}$. This corresponds to a situation where one, after being forestalled, plays a single-player stopping game with payoff function $h_i$.
\end{rem}

Given a control $\Gamma\in\mathcal A$, we denote by 
\begin{equation}
\label{gammai}
\gamma(u):=\inf\{t\geq 0:\Gamma_t>u\}
\end{equation}
the first time that $\Gamma$ exceeds $u\in[0,1)$; note that $\gamma(u)$ is a stopping time for each $u\in[0,1)$, and that
$\gamma=\gamma(U)$ where $U$ is the randomisation device. The following lemma is an immediate consequence of the law of total expectation.

\begin{lem}\label{lem}
Let $\Gamma^1,\Gamma^2\in\A$, denote by $\gamma_1$ and
$\gamma_2$ the corresponding randomised stopping times, and let $\gamma_i(u)$, $i=1,2$ be defined as in \eqref{gammai}. Then 
\begin{equation}\label{random}
J_1(x;\gamma_1,\gamma_2)= \int_0^1 J_1(x;\gamma_1(u),\gamma_2)\,du
\end{equation}
and 
\begin{equation}\label{random2}
J_2(x;\gamma_1,\gamma_2)=\int_0^1 J_2(x;\gamma_1,\gamma_2(u))\,du.
\end{equation}
\end{lem}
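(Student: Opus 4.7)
The plan is to exploit the independence of the randomisation device $U_1$ from everything else in the definition of $J_1$, and symmetrically $U_2$ for $J_2$, reducing the identity to a straightforward application of Fubini together with the tower property.

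First I would unwind the definition: writing $\gamma_1=\gamma_1(U_1)$, the random variable inside the expectation defining $J_1(x;\gamma_1,\gamma_2)$ is a measurable function of the tuple $(X,U_1,U_2,\theta_1,\theta_2)$ (the last three through $\gamma_2$ and $\hat\gamma_2$). By assumption $U_1$ is independent of $X$, of $U_2$, and of $(\theta_1,\theta_2)$, hence independent of the whole triple $(X,U_2,\theta_1,\theta_2)$. Consequently, conditioning on $U_1=u$ leaves the joint law of $(X,U_2,\theta_1,\theta_2)$ unchanged, while replacing the random time $\gamma_1(U_1)$ by the genuine $\mathbb F$-stopping time $\gamma_1(u)$ (see \eqref{gammai}).

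Next I would apply the tower property followed by Fubini's theorem, whose hypotheses are satisfied since the integrand is non-negative (both $g_1\geq 0$ and $V^{h_1}\geq 0$ by Assumption~\ref{ass} and the non-negativity of $g_1,h_1$):
\begin{align*}
J_1(x;\gamma_1,\gamma_2)
&=\E_x\bigl[\E_x\bigl[e^{-r\gamma_1}g_1(X_{\gamma_1})\mathds 1_{\{\gamma_1<\hat\gamma_2\}}+e^{-r\gamma_2}V^{h_1}(X_{\gamma_2})\mathds 1_{\{\gamma_1\geq \hat\gamma_2\}}\,\big|\,U_1\bigr]\bigr]\\
&=\int_0^1 \E_x\bigl[e^{-r\gamma_1(u)}g_1(X_{\gamma_1(u)})\mathds 1_{\{\gamma_1(u)<\hat\gamma_2\}}+e^{-r\gamma_2}V^{h_1}(X_{\gamma_2})\mathds 1_{\{\gamma_1(u)\geq \hat\gamma_2\}}\bigr]\,du,
\end{align*}
and the inner expectation is exactly $J_1(x;\gamma_1(u),\gamma_2)$, which proves \eqref{random}.

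The identity \eqref{random2} is obtained in the same way by conditioning on $U_2$ instead, using that $U_2$ is independent of $X$, $U_1$ and $(\theta_1,\theta_2)$, hence of $\gamma_1$ and $\hat\gamma_1$. The only point to keep an eye on is the correct handling of the event $\{\theta_i=0\}$ inside the definition of $\hat\gamma_{3-i}$; but since $\theta_1,\theta_2$ are independent of both randomisation devices, this causes no issue, and the argument is routine. There is no genuine obstacle, which is consistent with the author's remark that the lemma is an immediate consequence of the law of total expectation.
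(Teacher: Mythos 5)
Your argument is correct and is exactly the route the paper takes: the paper simply declares the lemma an immediate consequence of the law of total expectation, and your conditioning on the independent randomisation device $U_1$ (resp.\ $U_2$) followed by Tonelli/Fubini is precisely that argument spelled out. No gaps.
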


In the following result we provide simplified expressions for 
the integrands in \eqref{random}-\eqref{random2}. 

\begin{proposition}\label{prop}
If $\gamma_2$ is a $\Gamma^2-$randomised stopping time and $\tau$ is a stopping time, then
\begin{equation}\label{J1}
    J_1(x;\tau,\gamma_2)= \E_x\left[e^{-r\tau}(1-p_1\Gamma^2_{\tau})g_1(X_\tau)+p_1\int_{[0,\tau]} e^{-rt}V^{h_1}(X_t)\,d\Gamma^2_t\right].
    \end{equation}
Similarly, if $\gamma_1$ is a $\Gamma^1-$randomised stopping time, then 
\begin{equation}\label{J2}
    J_2(x;\gamma_1,\tau)=\E_x\left[e^{-r\tau}(1-p_2\Gamma^1_{\tau-})g_2(X_\tau)+p_2\int_{[0,\tau)} e^{-rt}V^{h_2}(X_t)\,d\Gamma^1_t\right].
    \end{equation}
\end{proposition}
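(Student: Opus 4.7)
The plan is to derive \eqref{J1} and \eqref{J2} by conditioning on the independent auxiliary randomness in three stages: first on the Bernoulli $\theta_i$ that decides whether Player~$i$ faces competition, then on $\mathcal F_\infty$ to integrate out the uniform device $U_{3-i}$ that generates the opponent's randomised stopping time.

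For \eqref{J1}, I first split according to $\{\theta_1=0\}$ and $\{\theta_1=1\}$. On $\{\theta_1=0\}$ we have $\hat\gamma_2=\infty$, so the first indicator equals $1$ (with the convention $g_1(X_\tau)\mathds 1_{\{\tau=\infty\}}=0$) and the second equals $0$; this contributes $(1-p_1)\E_x[e^{-r\tau}g_1(X_\tau)]$ by independence of $\theta_1$. On $\{\theta_1=1\}$, $\hat\gamma_2=\gamma_2$, and I condition on $\mathcal F_\infty$ to handle $\gamma_2=\inf\{t\ge 0:\Gamma^2_t>U_2\}$. The key distributional fact is that, since $\Gamma^2$ is right-continuous and non-decreasing and $U_2$ is independent of $\mathcal F_\infty$,
\[
\mathbb P(\gamma_2\le t\mid\mathcal F_\infty)=\Gamma^2_t,\qquad \mathbb P(\gamma_2<t\mid\mathcal F_\infty)=\Gamma^2_{t-},
\]
so the conditional law of $\gamma_2$ on $[0,\infty)$ is $d\Gamma^2$. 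Hence, for any $\mathcal F_\infty$-measurable process $f$,
\[
\E\bigl[f(\gamma_2)\mathds 1_{\{\gamma_2\le\tau\}}\,\big|\,\mathcal F_\infty\bigr]=\int_{[0,\tau]}f(t)\,d\Gamma^2_t,\qquad \mathbb P(\gamma_2>\tau\mid\mathcal F_\infty)=1-\Gamma^2_\tau.
\]
Applying these to $f(t)=e^{-rt}V^{h_1}(X_t)$ and to $e^{-r\tau}g_1(X_\tau)\mathds 1_{\{\tau<\gamma_2\}}$ and then taking the outer expectation gives
\[
J_1(x;\tau,\gamma_2)=\E_x\!\left[e^{-r\tau}g_1(X_\tau)\bigl((1-p_1)+p_1(1-\Gamma^2_\tau)\bigr)+p_1\!\int_{[0,\tau]}\!e^{-rt}V^{h_1}(X_t)\,d\Gamma^2_t\right],
\]
and the bracket simplifies to $1-p_1\Gamma^2_\tau$, yielding \eqref{J1}.

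For \eqref{J2} I follow the same recipe with Player~$2$ fixed at $\tau$ and Player~$1$ randomised, but now the indicators in the definition of $J_2$ are $\mathds 1_{\{\gamma_2\le\hat\gamma_1\}}$ and $\mathds 1_{\{\hat\gamma_1<\gamma_2\}}$ (Player~$2$ wins ties). Consequently on $\{\theta_2=1\}$ the relevant conditional probabilities are $\mathbb P(\gamma_1\ge\tau\mid\mathcal F_\infty)=1-\Gamma^1_{\tau-}$ and, for the consolation term,
\[
\E\!\left[e^{-r\gamma_1}V^{h_2}(X_{\gamma_1})\mathds 1_{\{\gamma_1<\tau\}}\,\big|\,\mathcal F_\infty\right]=\int_{[0,\tau)}e^{-rt}V^{h_2}(X_t)\,d\Gamma^1_t,
\]
with the half-open interval $[0,\tau)$ tracking the strict inequality. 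Combining exactly as before yields \eqref{J2}.

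The only subtle point is the bookkeeping around the tiebreaking rule: the asymmetric priority of Player~$2$ forces us to distinguish $\mathbb P(\gamma\le\tau\mid\mathcal F_\infty)=\Gamma_\tau$ from $\mathbb P(\gamma<\tau\mid\mathcal F_\infty)=\Gamma_{\tau-}$, which is exactly what produces $\Gamma^2_\tau$ in \eqref{J1} versus $\Gamma^1_{\tau-}$ and $\int_{[0,\tau)}$ in \eqref{J2}. Beyond this, everything is an application of Fubini/tower, which is justified by the uniform boundedness of $\Gamma^i$ by $1$ together with the integrability implicit in Assumption~\ref{ass} (since $V^{h_i}\ge h_i\ge 0$ and $V^{h_i}$ is dominated by the single-player value, for which the martingale/supermartingale conditions hold).
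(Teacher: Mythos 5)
Your proof is correct and follows essentially the same route as the paper: both split on $\theta_1$ (resp.\ $\theta_2$), identify $\mathbb P(\tau<\gamma_2\mid\cdot)=1-\Gamma^2_\tau$ (resp.\ $1-\Gamma^1_{\tau-}$), and reduce the consolation term to a Lebesgue--Stieltjes integral against $d\Gamma^{3-i}$. The only cosmetic difference is that you package the last step as the statement that the conditional law of $\gamma_{3-i}$ given $\mathcal F_\infty$ is $d\Gamma^{3-i}$, whereas the paper obtains the same identity by integrating over the randomisation variable $u$ and invoking the change-of-variables formula for inverses of increasing processes.
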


\begin{proof}
First note that 
\[\{\Gamma^2_\tau<U_2\}\subseteq \{\tau<\gamma_2\}\subseteq \{\Gamma^2_\tau\leq U_2\}\]
so that $\mathbb{P}_x(\tau<\gamma_2\vert \F_{\tau})=1-\Gamma^2_\tau$.
Therefore we have 
\begin{eqnarray*}
\E_x\left[e^{-r \tau} g_1(X_{\tau})\mathds{1}_{\{\tau< \hat \gamma_2\}}\right] &=& \E_x\left[e^{-r \tau} g_1(X_{\tau})(\mathds{1}_{\{\theta_1=0\}}+
\mathds{1}_{\{\theta_1=1\}}\mathds{1}_{\{\tau<  \gamma_2\}})\right]\\
&=&(1-p_1)\E_x\left[e^{-r \tau} g_1(X_{\tau})\right] 
    +p_1\E_x\left[e^{-r \tau} g_1(X_{\tau})\mathds{1}_{\{\tau< \gamma_2\}}\right]\\
&=&(1-p_1)\E_x\left[e^{-r \tau} g_1(X_{\tau})\right] 
    +p_1\E_x\left[e^{-r \tau} g_1(X_{\tau})\mathbb{P}_x(\tau<\gamma_2\vert \F_{\tau})\right]\\
&=& \E_x\left[e^{-r \tau} (1-p_1\Gamma^2_\tau)g_1(X_{\tau})\right],
\end{eqnarray*} 
where the second equality uses independence of $\theta_1$ and $X$.
Moreover, 
\begin{eqnarray*}
\E_x\left[e^{-r\gamma_2}V^{h_1}(X_{\gamma_2} )\mathds{1}_{\{\tau\geq \hat\gamma_2 \}}\right] 
&=& p_1\E_x\left[e^{-r\gamma_2}V^{h_1}(X_{\gamma_2} )\mathds{1}_{\{\tau\geq\gamma_2 \}}\right]\\
&=& p_1\E_x\left[\int_0^1 e^{-r\gamma_2(u)}V^{h_1}(X_{\gamma_2(u)} )\mathds{1}_{\{\tau\geq\gamma_2(u) \}}\,du\right]\\
&=& p_1\E_x\left[\int_{[0,\tau]} e^{-rt}V^{h_1}(X_{t} )\,d\Gamma^2_t\right],
\end{eqnarray*}
where the last equality follows from \cite[Proposition 4.9, p.~8]{RY} since $\gamma_2$ is the inverse of $\Gamma^2$. Thus \eqref{J1} follows.

    The argument for \eqref{J2} is similar, and we omit the details.
\end{proof}

It is clear from Lemma~\ref{lem} and Proposition~\ref{prop} that the functionals $J_1$ and $J_2$ only depend on the chosen controls $\Gamma^1$
and $\Gamma^2$ (and not on the randomisation devices). We may therefore identify a randomised stopping time $\gamma$ with its increasing control $\Gamma$; for example, we sometimes write $J_i(x;\Gamma^1,\Gamma^2)$ or $J_i(x;\Gamma^1,\gamma_2)$ instead of $J_i(x;\gamma_1,\gamma_2)$.

\begin{definition}
A pair $(\Gamma_1^*,\Gamma_2^*)\in\A^2$ is a Nash equilibrium if for any pair 
$(\Gamma_1,\Gamma_2)\in\A^2$ we have 
	\[J_1(x;\Gamma_1,\Gamma_2^*)\leq J_1(x;\Gamma_1^*,\Gamma_2^*) \text{ and }J_2(x;\Gamma_1^*,\Gamma_2)\leq J_2(x;\Gamma_1^*,\Gamma_2^*). \]
\end{definition}

 Our goal is to obtain conditions under which a Nash equilibrium $(\Gamma_1^*,\Gamma_2^*)$ exists, and to study its associated equilibrium values
 	\[u_1(x,p_1,p_2)=J_1(x;\Gamma_1^*,\Gamma_2^*) \quad\&\quad u_2(x,p_1,p_2)=J_2(x;\Gamma_1^*,\Gamma_2^*).\]

\section{A verification theorem}
In this section we provide a verification result, i.e. we specify conditions under which a Nash equilibrium can be constructed from two given functions 
$u_1,u_2:\I\times[0,1]^2\to[0,\infty)$. Here the functions $u_1(x,p_1,p_2)$ and $u_2(x,p_1,p_2)$ will play the role of equilibrium values for the obtained Nash equilibrium.

Given two control processes $\Gamma^1,\Gamma^2$, define the corresponding {\em adjusted belief process} 
\begin{equation}
    \label{Pi}
\Pi^i_t=\left\{\begin{array}{cl}
\frac{p_i(1-\Gamma^{3-i}_t)}{1-p_i\Gamma^{3-i}_t} & \mbox{if }p_i<1\\
1 & \mbox{if }p_i=1\end{array}\right.
\end{equation}
for $t\geq 0-$. Note that
\[\Pi^i_t=\mathbb P_x(\theta_i=1\vert \F_t,\hat\gamma_{3-i}>t),\]
so $\Pi^i$ is the conditional probability of active competition  for Player~$i$, conditional on observations of $X$ and on the event that $\hat\gamma_{3-i}$ has not yet occurred.

\begin{theorem}
		\label{verif}
Let two continuous functions $u_1,u_2:\I\times[0,1]^2\to[0,\infty)$ and a pair $(\Gamma^1,\Gamma^2)\in\A^2$ be given. 
Assume that $u_i\leq V^{g_i}$, and that $\Gamma^i_{\tau_{g_i}}=1$, $i=1,2$. Define on $[0,\infty)$ two processes
		\[M^1_t:=
			e^{-rt}(1-p_{1}\Gamma^{2}_{t})u_1(X_{t},\Pi^{1}_{t}, \Pi^2_{t}) + p_{1}\int_{[0,t]} e^{-rs}V^{h_1}(X_s)\,d\Gamma^{2}_s\]
   and 
   	\[M^2_t:=
			e^{-rt}(1-p_{2}\Gamma^{1}_{t-})u_2(X_{t},\Pi^{1}_{t-}, \Pi^2_{t-}) + p_{2}\int_{[0,t)} e^{-rs}V^{h_2}(X_s)\,d\Gamma^{1}_s,\]
and assume that 
\begin{itemize}
\item[(i)]
$M^i$ is a supermartingale, and it is a martingale 
on $[0,\gamma_i(u)]$ for any $u<1$, $i=1,2$;
\item[(i')] 
$M^2$ is continuous on $[0,\tau_{g_2}]$ and on $(\tau_{g_2},\infty)$, with 
$M^2_{\tau_{g_2}+}-M^2_{\tau_{g_2}}\leq 0$;
\item[(ii)]
$u_1(X_t,\Pi^1_{t},\Pi^2_{t})\geq g_1(X_t)$ and $u_2(X_t,\Pi^1_{t-},\Pi^2_{t-})\geq g_2(X_t)$ for all $t\geq 0$ $\mathbb P_x$-a.s.;
\item[(iii)]
$\Gamma^1_t=\int_{[0,t]} 1_{\{u_1(X_s,\Pi^1_{s},\Pi^2_{s})=g_1(X_s)\}}\,d\Gamma^1_s$ and $\Gamma^2_t=\int_{[0,t]} 1_{\{u_2(X_s,\Pi^1_{s-},\Pi^2_{s-})=g_2(X_s)\}}\,d\Gamma^2_s$.
\end{itemize}
Then $(\Gamma^1,\Gamma^2)$ is a Nash equilibrium, 
and the equilibrium values are given by 
\[M^1_0=(1-p_1\Gamma_0^2)u_1(x,\Pi^1_0,\Pi_0^2) + p_1\Gamma^2_0 V^{h_1}(x)\]
and $M^2_0=u_2(x,p_1,p_2)$, respectively.
	\end{theorem}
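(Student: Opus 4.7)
The plan is to prove the Nash property via two separate arguments: (a) any deviation from $\Gamma^i$ is bounded above by $M^i_0$ using the supermartingale property together with condition (ii), and (b) using the candidate controls achieves exactly $M^i_0$, via the martingale property on $[0,\gamma_i(u)]$ together with condition (iii). First I would read off the equilibrium values from the definitions at $t=0$: since $\Gamma^i_{0-}=0$ gives $\Pi^i_{0-}=p_i$, and the integral in $M^2$ is over the empty interval $[0,0)$, one obtains $M^2_0=u_2(x,p_1,p_2)$ directly; for $M^1$ the integral $\int_{[0,0]}$ contributes $\Gamma^2_0 V^{h_1}(x)$, yielding the split form announced.

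For the deviation upper bound for Player~1, Lemma~\ref{lem} reduces the problem to a pure stopping time $\tau$, and Proposition~\ref{prop} then expresses $J_1(x;\tau,\Gamma^2)$ in a form differing from $M^1_\tau$ only in that $u_1(X_\tau,\Pi^1_\tau,\Pi^2_\tau)$ is replaced by $g_1(X_\tau)$. Condition (ii) therefore gives
\[J_1(x;\tau,\Gamma^2)\leq \E_x[M^1_\tau],\]
and supermartingality in (i) gives $\E_x[M^1_\tau]\leq M^1_0$. An analogous argument for Player~2 uses the left-limit versions in Proposition~\ref{prop} and yields $J_2(x;\Gamma^1,\tau)\leq M^2_0=u_2(x,p_1,p_2)$ for every admissible $\tau$.

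For equality when both players use the candidate controls, I would again invoke Lemma~\ref{lem} to decompose the payoff as $\int_0^1 J_i(\cdot)\,du$ and treat each $u<1$ separately. Condition (i) makes $M^i$ a uniformly integrable martingale on $[0,\gamma_i(u)]$, so $\E_x[M^i_{\gamma_i(u)}]=M^i_0$. Condition (iii) then forces $u_i=g_i$ at the process-point at time $\gamma_i(u)$, so $M^i_{\gamma_i(u)}$ coincides with the Proposition~\ref{prop} integrand for $J_i$ evaluated at $\gamma_i(u)$. Taking expectations and integrating over $u\in[0,1)$ gives $J_i(x;\Gamma^1,\Gamma^2)=M^i_0$, which combined with the deviation bound yields the Nash property.

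The most delicate step is extracting the pointwise equality $u_i(X_{\gamma_i(u)},\cdot)=g_i(X_{\gamma_i(u)})$ from the Stieltjes identity in condition (iii) at the precise random time $\gamma_i(u)$. If $\Gamma^i$ has a jump there, then (iii) applied to the atom gives the equality immediately; in the continuous-increase case one must use right-continuity of $\Gamma^i$, continuity of $X$ and of $u_i$, and continuity of $\Pi^i$ as a function of $\Gamma^{3-i}$. A secondary delicate point concerns Player~2: the downward jump of $M^2$ at $\tau_{g_2}$ allowed by (i') must not interfere with the equality argument, and this is ensured by the assumption $\Gamma^2_{\tau_{g_2}}=1$, which forces $\gamma_2(u)\leq\tau_{g_2}$ for every $u<1$ and keeps the argument on the continuous portion of $M^2$.
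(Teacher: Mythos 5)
Your proposal is correct and follows essentially the same route as the paper: bound any pure-stopping-time deviation by $\E_x[M^i_\tau]\leq M^i_0$ using (ii) and supermartingality together with Proposition~\ref{prop}, extend to randomised deviations via Lemma~\ref{lem}, and recover equality along $\gamma_i(u)$ from the martingale property in (i) and the support condition (iii). Your added discussion of how (iii) yields $u_i=g_i$ at $\gamma_i(u)$ (jump vs.\ continuous increase) and of why $\Gamma^2_{\tau_{g_2}}=1$ keeps $\gamma_2(u)$ on the continuous part of $M^2$ makes explicit two points the paper leaves implicit.
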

	
\begin{proof}
 Let $\tau$ be a stopping time. By optional sampling, 
 \begin{eqnarray*}
 M^1_{0} &\geq& \E_x[M_\tau^1]= \E_x\left[e^{-r\tau}(1-p_{1}\Gamma^{2}_{\tau})u_1(X_{\tau},\Pi^{1}_{\tau}, \Pi^2_{\tau}) + p_{1}\int_{[0,\tau]} e^{-rs}V^{h_1}(X_s)\,d\Gamma^{2}_s\right]\\
&\geq& \E_x\left[e^{-r\tau}(1-p_{1}\Gamma^{2}_{\tau})g_1(X_{\tau}) + p_{1}\int_{[0,\tau]} e^{-rs}V^{h_1}(X_s)\,d\Gamma^{2}_s\right]= J^1(x;\tau,\Gamma^2),
\end{eqnarray*}
where the last equality follows from Proposition~\ref{prop}.  
Moreover, by (i) and (iii), the inequalities reduce to equalities for the stopping times $\gamma_1(u)$, $u\in[0,1)$. Consequently, using Lemma~\ref{lem},
 \[J_1(x; \Gamma^1,\Gamma^2)=M_0^1=\sup_{\Gamma\in\A}J^1(x;\Gamma,\Gamma^2)\]
 so $\Gamma^1$ is an optimal response to $\Gamma^2$.

Similarly,
\begin{eqnarray*}
M_0^2 &\geq& \E_x[M_\tau^2] =
\E_x\left[e^{-r\tau}(1-p_{2}\Gamma^{1}_{\tau-})u_2(X_{\tau},\Pi^1_{\tau-},\Pi^2_{\tau-}) + p_{2}\int_{[0,\tau)} e^{-rs}V^{h_2}(X_s)\,d\Gamma^{1}_s\right]\\
&\geq& \E_x\left[e^{-r\tau}(1-p_{2}\Gamma^{1}_{\tau-})g_2(X_{\tau}) + p_{2}\int_{[0,\tau)} e^{-rs}V^{h_2}(X_s)\,d\Gamma^{1}_s\right]= J_2(x;\Gamma^1,\tau),
\end{eqnarray*}
with equality for $\tau=\gamma_1(u)$, $u\in[0,1)$.
Thus
\[J_2(x;\Gamma^1,\Gamma^2)=M_0^2=\sup_{\Gamma\in\A}J_2(x;\Gamma^1,\Gamma),\]
which completes the proof.
	\end{proof}

\section{Symmetric games with consolation}\label{sec4}

Obtaining explicit solutions to general problems exhibiting both asymmetry and consolation seems out of reach; however, some cases are amenable for further analysis. 
In this section we discuss how candidate equilibrium value functions 
$u_1$ and $u_2$ can be constructed for symmetric games.

To do that, assume that 
\begin{equation}
\label{symm}
g_1=g_2=:g\mbox{ and }h_1=h_2=:h
\end{equation} 
and that $p_1\leq p_2$.
Since both players share the same payoff functions, the player with the smallest probability of competition (Player~1) is less exposed to competition and should be more willing to stop late (i.e. at $\tau_g$). We thus expect that the equilibrium strategy $\Gamma^1$ of Player~1 should satisfy $\gamma_1(u)\to\tau_g$ as $u\to 1$, and in view of the indifference principle in game theory, 
the equilibrium value for Player~1 should be obtained by simply using $\gamma_1(1-)=\tau_g$. However, in that case one sees that 
\begin{equation}
\label{indiff}
J_1(x;\tau_g,\gamma_2)=(1-p_1)\E_x[e^{-r\tau_g}g(X_{\tau_g})]  +p_1\E_x[e^{-r\gamma_2}V^h(X_{\gamma_2})]\leq (1-p_1)V^g(x)+p_1V^h(x)
\end{equation}
for an equilibrium strategy $\gamma_2<\tau_g$, where the inequality uses supermartingality and optional sampling.

\subsection{Symmetric games with martingale consolation}
In addition to the symmetry condition \eqref{symm}, also assume that 
\begin{equation}
    \label{inclusion}
    e^{-rt\wedge\tau_g}V^h(X_{t\wedge\tau_g})\mbox{ is a martingale.}
    \end{equation}

\begin{rem}
In view of Assumption~\ref{ass}, a sufficient condition for \eqref{inclusion} to hold is that
\begin{equation}
\{x\in\I: V^g(x)>g(x)\}\subseteq\{x\in\I: V^h(x)>h(x)\},
\end{equation}
i.e. that the continuation region of $h$ contains the continuation region of $g$.\qed
\end{rem}

Under the assumption \eqref{inclusion}, the inequality in \eqref{indiff} is an equality, 
and the equilibrium value of Player~1 should be 
\begin{equation*}
u_1(x,p_1,p_2)=(1-p_1)V^g(x)+p_1V^h(x).
\end{equation*} 
If the process $\Gamma^2$ is constructed so that $\Pi^1_t$
reflects in the boundary $b$ defined by $(1-b(x))V^g(x)+b(x)V^h(x)=g(x)$,
then one needs by optimality considerations for Player~2 that also $u_2=g$ along the boundary. This leads to 
$u_2(x,p_1,p_2)=(1-p_1)V^g(x)+p_1V^h(x)$, and then $\Gamma^1$ can be chosen so that $M^2$ is a martingale. Note that the equilibrium values $u_1$ and $u_2$ then do not depend on $p_2$ (as long as $p_2\geq p_1$).

We now make the above heuristics precise.

\begin{theorem} 
\label{sym}
		Assume that $g_1=g_2=:g$, $h_1=h_2=:h$ and that \eqref{inclusion} holds.
Also assume that $p_1\leq p_2$.
Define a boundary 
\[b(x):=\left\{\begin{array}{cl}
0 & \mbox{if }V^g(x)=g(x)\\
\frac{V^g(x)-g(x)}{V^g(x)-V^h(x)} & \mbox{if }V^h(x)\leq g(x)<V^g(x)\\
1 & \mbox{if }g(x)<V^h(x),\end{array}\right.\]
  and let 
   \[\Gamma^2_t:=\frac{p_1-p_1\wedge \inf_{0\leq s\leq t} b(X_s)}{p_1(1-p_1\wedge \inf_{0\leq s\leq t} b(X_s))}\]
  and 
  \[\Gamma^1_t:=\left\{\begin{array}{cl}
  \frac{p_1(1-b(x)\wedge p_1)}{p_2(1-p_1)}(\Gamma^2_t-\Gamma^2_0) & t<\tau_g\\
  1 & t\geq \tau_g.\end{array}\right.\]
  Then $(\Gamma^1,\Gamma^2)$ is a Nash equilibrium, with corresponding equilibrium values
  \begin{equation}
\label{cand}
u_1(x,p_1):=(1-p_1)V^g(x)+p_1V^h(x)
\end{equation}
  and
  \[u_2(x,p_1):=\max\left\{(1-p_1)V^g(x)+p_1V^h(x), g(x)\right\}.\]
\end{theorem}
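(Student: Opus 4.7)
The plan is to invoke Theorem~\ref{verif} with the candidates $u_1,u_2,\Gamma^1,\Gamma^2$ as stated. The bounds $u_i\leq V^{g_i}$ follow at once from $V^h\leq V^g$ and $g\leq V^g$, while $\Gamma^1_{\tau_g}=1$ holds by construction and $\Gamma^2_{\tau_g}=1$ because $b(X_{\tau_g})=0$ forces $\inf_{0\leq s\leq \tau_g}b(X_s)=0$.

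The first substantive step is an algebraic simplification. Writing $I_t:=\inf_{0\leq s\leq t}b(X_s)$, a direct manipulation of the definitions yields
\[\Pi^1_t=p_1\wedge I_t\qquad\text{and}\qquad 1-p_1\Gamma^2_t=\frac{1-p_1}{1-p_1\wedge I_t}.\]
Continuity of $b$ at the interfaces $\{V^g=g\}$ and $\{g=V^h\}$, together with continuity of $X$, makes $I$, $\Pi^1$ and $\Gamma^2$ continuous. Since $\Pi^1_t\leq b(X_t)$, the definition of $b$ gives $u_1(X_t,\Pi^1_t)\geq g(X_t)$, and on the same set $u_2(X_t,\Pi^1_t)=u_1(X_t,\Pi^1_t)$; this verifies condition~(ii), and the bound $u_2\geq g$ is then automatic from $u_2=\max\{u_1,g\}$.

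For condition~(i) applied to $M^1$, combining the two identities above yields
\[M^1_t=(1-p_1)e^{-rt}V^g(X_t)+p_1\Bigl[(1-\Gamma^2_t)e^{-rt}V^h(X_t)+\int_{[0,t]}e^{-rs}V^h(X_s)\,d\Gamma^2_s\Bigr].\]
An integration by parts, exploiting that $\Gamma^2$ is continuous of bounded variation and that $e^{-rt}V^h(X_t)$ is a martingale up to $\tau_g$ by~\eqref{inclusion}, collapses the bracket into a martingale, so $M^1$ is a martingale on $[0,\tau_g]$ and hence on $[0,\gamma_1(u)]\subseteq[0,\tau_g]$ for every $u<1$. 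The analogous treatment of $M^2$ on $[0,\tau_g)$ hinges on the algebraic invariant
\[(1-\Pi^1_t)(1-p_2\Gamma^1_t)=1-p_1\wedge b(x),\]
which can be read off from the explicit formulas for $\Gamma^1,\Gamma^2$ and $\Pi^1$; this identity is precisely what makes the finite-variation part of $dM^2_t$ vanish after integration by parts, producing a true martingale on $[0,\tau_g]$. For (i') we note that $M^2$ is left-continuous on $[0,\tau_g]$ by construction, and that on $(\tau_g,\infty)$ both $\Gamma^1_{t-}$ and $\Pi^1_{t-}$ are constant, so $M^2=(1-p_2)e^{-rt}V^g(X_t)+\text{const}$ is continuous there. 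The jump at $\tau_g$ is then a direct computation using $\Pi^1_{\tau_g-}=0$ and $\Gamma^1_{\tau_g-}=(p_1\wedge b(x))/p_2$, giving
\[M^2_{\tau_g+}-M^2_{\tau_g}=-e^{-r\tau_g}(p_2-p_1\wedge b(x))\bigl(V^g(X_{\tau_g})-V^h(X_{\tau_g})\bigr)\leq 0.\]

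Condition~(iii) is handled by observing that $d\Gamma^2$ is supported on the times when $I_t$ attains a new infimum, hence on $\{\Pi^1_{s-}=b(X_s)\}$, where the definition of $b$ forces $u_2=u_1=g$; the same holds for $d\Gamma^1$ on $[0,\tau_g)$ since it is a constant multiple of $d\Gamma^2$ there, and the atom of $\Gamma^1$ at $\tau_g$ sits in $\{V^g=g\}$, where $u_1=V^g=g$. A short computation then identifies $M^1_0$ and $M^2_0$ with $u_1(x,p_1)$ and $u_2(x,p_1)$ respectively. The main obstacle is discovering and verifying the invariant $(1-\Pi^1_t)(1-p_2\Gamma^1_t)=1-p_1\wedge b(x)$ together with the careful bookkeeping for the jump of $M^2$ at $\tau_g$; once these are in hand, the remaining verifications are systematic.
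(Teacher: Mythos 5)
Your proposal is correct and follows essentially the same route as the paper's proof: it verifies the hypotheses of Theorem~\ref{verif} via the same algebraic identities (your $\Pi^1_t=p_1\wedge I_t$ and the invariant $(1-\Pi^1_t)(1-p_2\Gamma^1_t)=1-p_1\wedge b(x)$ are exactly the paper's relations for $(1-p_1\Gamma^2_t)\Pi^1_t$ and $(1-p_2\Gamma^1_{t-})(1-\Pi^1_{t-})$), the same decomposition of $M^1$ and $M^2$ into $V^g$- and $V^h$-terms, and the same jump computation at $\tau_g$. The only slight imprecision is the claim that $b$ is continuous at the interfaces: the paper only establishes lower semicontinuity, with possible downward jumps confined to $\{V^g=g\}$, which suffices because such points are not visited before $\tau_g$.
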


\begin{proof}
First note that it follows from the continuity conditions in Assumption~\ref{ass} that $b$ is lower semi-continuous; moreover, if
$\liminf_{y\to x}b(y)>b(x)$ for some $x\in\I$, then $V^g(x)=g(x)$.
Consequently, $t\mapsto \Gamma^2_t$ is continuous on $[0,\tau_g)$ and $t\mapsto \Gamma^1_t$ is continuous on $[0-,\tau_g)$, with $\Gamma^1_{\tau_g}=\Gamma^2_{\tau_g}=1$.
Furthermore,
\[(1-p_1\Gamma^2_t)(1-\Pi^1_t)=1-p_1\]
and 
\[(1-p_1\Gamma^2_t)\Pi^1_t = p_1(1-\Gamma^2_t)\]
for $t\geq 0-$, and 
\[(1-p_2\Gamma^1_{t-})(1-\Pi^1_{t-})=\left\{\begin{array}{cl}
1-p_1 & t=0\\
1- b(x)\wedge p_1 & 0<t\leq \tau_g\\
1-p_2 & t>\tau_g\end{array}\right.\]
and 
\[(1-p_2\Gamma^1_{t-})\Pi^1_{t-}=\left\{\begin{array}{cl}
p_1 & t=0\\
p_1\wedge b(x)-p_2\Gamma^1_{t-} & 0<t\leq \tau_g\\
0 & t>\tau_g.\end{array}\right.\]
Consequently,
\begin{eqnarray*}
M^1_t &:=& e^{-rt} (1-p_1\Gamma^{2}_t) u_1(X_{t},\Pi^1_{t})+ p_1\int_{[0,t]} e^{-rs} V^h(X_s)\,d\Gamma^{2}_s\\
&=& e^{-rt}(1-p_1)V^g(X_t)+e^{-rt}p_1(1-\Gamma^2_t)V^h(X_t)+p_1\int_{[0,t]}e^{-rs}V^h(X_s)\,d\Gamma^2_s,
\end{eqnarray*}
which is a supermartingale on $[0,\infty)$ and a martingale on $[0, \tau_g]$. Similarly, 
\begin{eqnarray*}
M^2_t &:=& e^{-rt} (1-p_2\Gamma^{1}_{t-}) u_2(X_{t},\Pi^1_{t-})+ p_2\int_{[0,t)} e^{-rs} V^h(X_s)\,d\Gamma^{1}_s\\
&=& \left\{\begin{array}{ll}
e^{-rt}(1-p_1\wedge b(x))V^g(X_t)+e^{-rt}(p_1\wedge b(x)-p_2\Gamma^1_{t-})V^h(X_t)\\ \hspace{45mm}+p_2\int_{[0,t)}e^{-rs}V^h(X_s)\,d\Gamma^1_s& t\leq \tau_g\\
e^{-rt}(1-p_2)V^g(X_t)+p_2\int_{[0,\tau_g]}e^{-rs}V^h(X_s)\,d\Gamma^1_s  & t> \tau_g.
\end{array}\right.
\end{eqnarray*}
At $t=\tau_g$, the process $M^2$ makes a jump of size
\[M^2_{\tau_g+}-M^2_{\tau_g}=-e^{-r\tau_g}(p_2-p_1\wedge b(x))(V^g(X_{\tau_g})-V^h(X_{\tau_g}))\leq 0,\]
and it is thus clear that $M^2$ is a supermartingale on $[0,\infty)$ and a martingale on $[0,\tau_g]$.
By the verification result (Theorem~\ref{verif}), the pair $(\Gamma^1,\Gamma^2)$
constitutes a Nash equilibrium, and the corresponding equilibrium values are given by $M_0^1=u _1(x,p_1)$ and $M^2_0=u_2(x,p_1)$, respectively.
\end{proof}

\begin{rem}\label{rem2}
In the main result of \cite{DE}, an equilibrium was constructed for the symmetric case with $g:=g_1=g_2$ and $h_1=h_2=0$.
Since condition \eqref{inclusion} is trivially satisfied in that case, 
Theorem~\ref{sym} can be viewed as the extension of the main result in \cite{DE} to a set-up allowing for (martingale) consolation; note, however, that in case of simultaneous stopping the payoff in \cite{DE} was specified to be evenly distributed between players and thus slightly different from the present set-up. 
\end{rem}

\begin{example}\label{ex1}
Let $X$ be a geometric Brownian motion
     \[dX_t = \mu X_t \,dt+ \sigma X_t\, dW_t, \quad X_0 = x, \]
     where $\sigma>0$ and $\mu<r$ are constants and $W$ is a standard Brownian motion, and consider the game specification $g(x)=g_1(x)=g_2(x)=(x-K)^+$ and 
$h(x)=h_1(x)=h_2(x)=(x-L)^+$, for positive constants $K<L$. 
This can be seen as a model for investment in a real option under possible competition, where the investment cost is larger for the second investor (for the non-competitive case, see \cite{dixit} and \cite{MS}). 
Then 
\[V^g(x)=\left\{\begin{array}{ll}
(b_g-K)(x/b_g)^\gamma & x<b_g\\
x-K & x\geq b_g,\end{array}\right.\]
where 
\[b_g=\frac{\gamma K}{\gamma-1}\]
and $\gamma>1$ is the unique positive solution of 
\begin{equation}
\label{gamma}
\frac{\sigma^2}{2}\gamma(\gamma-1)+\mu \gamma-r=0,
\end{equation}
with similar expressions for $V^h$ and $b_h$.

By Theorem~\ref{sym}, an equilibrium is obtained if 
 \[\Gamma^1_t:=\left\{\begin{array}{cl}
  \frac{p_1(1-b(x)\wedge p_1)}{p_2(1-p_1)}(\Gamma^2_t-\Gamma^2_0) & t<\tau_g\\
  1 & t\geq \tau_g,\end{array}\right.\]
and
$\Gamma^2$ is chosen so that $(\Pi^1,X)$ reflects in the boundary 
\[b(x):=\left\{\begin{array}{cl}
0 & \mbox{if }x\geq b_g\\
\frac{V^g(x)-g(x)}{V^g(x)-V^h(x)} & \mbox{if }V^g(x)>g(x)\geq V^h(x)\\
1 & \mbox{if }x<a,\end{array}\right.\]
where $a\in[K,b_g]$ is the unique positive solution of $V^h(a)=g(a)$. 
Moreover, the corresponding equilibrium values are given by $u_1(x,p_1)=(1-p_1)V^g(x)+p_1V^h(x)$ and $u_1(x,p_1)=\max\{(1-p_1)V^g(x)+p_1V^h(x), g(x)\}$, respectively.
For a graphical illustration of
the reflected process, see Figure~\ref{fig1}.

\begin{figure}[htp!]
    \centering
    \includegraphics[width=0.7\textwidth]{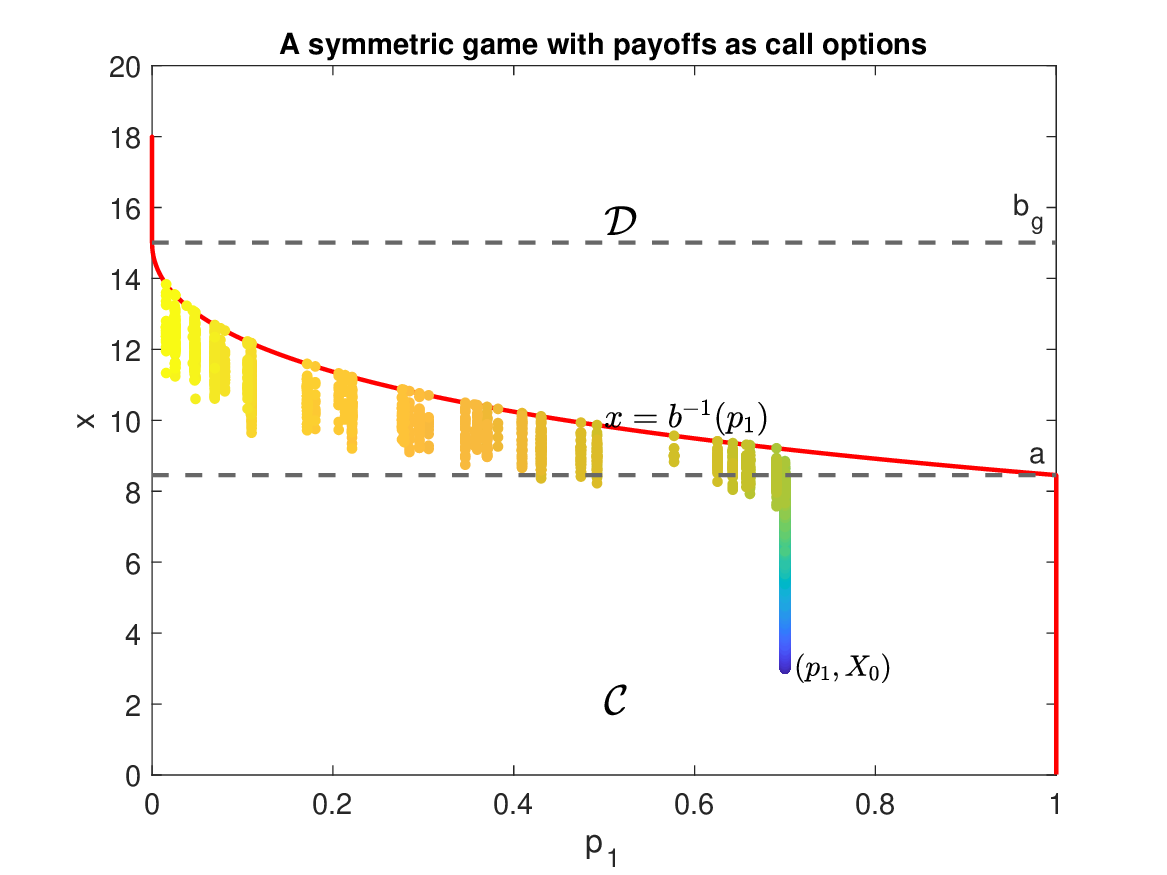}
    \caption{The  boundary $b$ and a simulated path of the reflected process $(\Pi^1,X)$ from
Example~\ref{ex1}. Here $g(x) = (x-3)^+$, $h(x) = (x-4)^+$, $\mu = 0.08$, $\sigma = 0.01$ and $r = 0.1$.}
    \label{fig1}
\end{figure}
\end{example}

\subsection{Symmetric games with supermartingale consolation}\label{sec6}

We now consider the symmetric case with $g:=g_1=g_2$ and $h:=h_1=h_2$, but without the assumption \eqref{inclusion} of martingale consolation. 
Again we impose the condition $p_1\leq p_2$, and note that the same argument as in \eqref{indiff} gives a candidate value 
\begin{equation}
    \label{u1}
u_1(x,p_1,p_2)=(1-p_1)V^g(x) + p_1\E_x [e^{-r\gamma_2}V^h(X_{\gamma_2})]
\end{equation}
for Player~1, 
where $\gamma_2$ is the equilibrium strategy of Player~2. Due to supermartingality, however, the second term is no longer explicitly known, so there is no immediate way to construct candidate equilibrium values. 

On the other hand, it may still be argued that the equilibrium value function $u_1$ is independent of $p_2$, provided $p_1\leq p_2$. An informal motivation for this is as follows. First note that  $u_1(x,p_1,p_2)$ should be non-increasing in $p_2$ since a larger belief in competition suggests that Player~2 should stop early, which decreases the value for Player~1. At the same time, the first term on the right-hand side in \eqref{u1} does not depend on $p_2$, whereas the second term exhibits the opposite monotonicity: a large $p_2$ would suggest a small $\gamma_2$, and supermartingality then suggests that $u_1$ is non-decreasing in $p_2$. 
Consequently, we thus expect that $u_1=u_1(x,p_1)$ provided $p_1\leq p_2$.
Then, for 
\[M^1_t=e^{-rt}(1-p_{1}\Gamma^{2}_{t})u_1(X_{t},\Pi^{1}_{t}) + p_{1}\int_{[0,t]} e^{-rs}V^{h_1}(X_s)\,d\Gamma^{2}_s\]
to be a martingale, one needs the condition 
\begin{equation}
\label{bc}
(1-p_1)\frac{\partial u_1}{\partial p_1} + u_1=V^h
\end{equation}
to hold at points of increase of the control $\Gamma^2$.

\begin{example} 
Consider the symmetric case with $g(x)=g_1(x)=g_2(x)=(x-K)^+$ where $K>0$ is a given constant,
$h=h_1=h_2\leq g$, and assume that $p_1\leq p_2$.
As in Example~\ref{ex1} above, let 
 \[b_g:=\frac{\gamma K}{\gamma-1}\]
be the single-player optimal boundary. For simplicity, we also assume that $h(x)<g(x)$ for $x>K$, and  denote 
$a$ the unique solution to $V^h(a)=g(a)$ in $[K,b_g]$. 

We make the Ansatz 
\[u_1(x,p_1)=c(p_1)\psi(x)\]
for $x\leq b^{-1}(p)$, where $\psi(x):=x^\gamma$, and $b$ and $c$ are yet to be determined. 
From the condition $u_1=g$ at boundary points $(x,b(x))$ and the martingale condition
\eqref{bc}, we get the system
\[\left\{\begin{array}{ll}
   c(b(x))\psi(x)=g(x)    \\
   (1-b(x))c'(b(x))\psi(x) + c(b(x)) \psi(x)= V^h(x).
\end{array}\right.\]
Eliminating $c$, this leads to the ODE 
\begin{equation}
    \label{bODE}
(1-b(x))(\frac{g}{\psi})'(x) \psi(x)+(g(x) -V^h(x))b'(x)=0,
\end{equation}
with solution
\begin{equation}\label{der b}
b(x)=1-\exp\left\{-\int^{b_g}_x\frac{(\frac{g(y)}{\psi(y)})'\psi(y)}{g(y)-V^h(y)}dy\right\},
\end{equation}
where we also imposed the boundary condition $b(b_g)=0$. 

For the verification, we thus define 
 \[b(x) := \left\{\begin{array}{cl}
 0 & x\geq b_g\\
 1-\exp\left\{-\int^{b_g}_{x} \frac{1-\gamma+\frac{K\gamma}{y}}{y-K-V^h(y)}dy\right\} & x\in(a,b_g)\\
 1 & x\leq a,\end{array}\right.\]
and note that $b$ is continuous.
Let 
\begin{equation}\label{G2}
\Gamma^2_t:=\frac{p_1-p_1\wedge \inf_{0\leq s\leq t} b(X_s)}{p_1(1-p_1\wedge \inf_{0\leq s\leq t} b(X_s))}
\end{equation}
  and 
  \begin{equation}\label{G1}
  \Gamma^1_t:=\left\{\begin{array}{cl}
  \frac{p_1}{p_2}(\Gamma^2_t-\Gamma^2_0) & t<\tau_g\\
  1 & t\geq \tau_g.\end{array}\right.
 \end{equation}
Also, let 
\begin{equation}
    u_1(x,p_1):=\left\{\begin{array}{cl}
V^g(x) & p_1=0\\
\frac{g(b^{-1}(p_1))}{\psi(b^{-1}(p_1))}\psi(x) & 0<p_1\leq b(x) \\
\frac{1-p_1}{1-b(x)}g(x) + \frac{p_1-b(x)}{1-b(x)} V^h(x) & p_1>b(x)\end{array}\right.
\end{equation}
and
\[u_2(x,p_1):=\left\{\begin{array}{cl}
V^g(x) & p_1=0\\
\frac{g(b^{-1}(p_1))}{\psi(b^{-1}(p_1))}\psi(x) &
0<p_1\leq b(x)\\
g(x) & p_1>b(x).\end{array}\right.\]

Then 
\[M^1_t := e^{-rt}(1-p_{1}\Gamma^{2}_{t})u_1(X_{t},\Pi^{1}_{t}) + p_{1}\int_{[0,t]} e^{-rs}V^{h}(X_s)\,d\Gamma^{2}_s\]
   and 
   	\[M^2_t:=
			e^{-rt}(1-p_{2}\Gamma^{1}_{t-})u_2(X_{t},\Pi^{1}_{t-}) + p_{2}\int_{[0,t)} e^{-rs}V^{h}(X_s)\,d\Gamma^{1}_s\]
fulfill the conditions in Theorem~\ref{verif}, so
$(\Gamma^1,\Gamma^2)$ is a Nash equilibrium with corresponding equilibrium values 
$u_1(x,p_1)$ and $u_2(x,p_1)$.
\end{example}

\section{Asymmetric games}
\label{sec7}

In the symmetric games studied above, the key insight that leads to their solutions is the intuition that Player~1 has an incentive to stop late (compared to Player~2) provided $p_1\leq p_2$.
This intuition extends to some asymmetric games.
We illustrate this in the following example which accommodates both asymmetry and consolation.

\begin{example}\label{ex5.1}
    Consider a situation where each investor has an individual
    investment cost, which also goes up if forestalled.
   More precisely, let $g_i (x) = (x-K_i)^+$ and $h_i(x) = (x-L_i)^+$, with $K_i<L_i$ for $i = 1,2$, and with $K_2\leq K_1$. As usual, the value functions in the corresponding single-player games are given by  
     \[V^{g_i}(x)=\left\{\begin{array}{cl}
     (b_{g_i}-K_i)(\frac{x}{b_{g_i}})^{\gamma} & x<b_{g_i}\\
     x-K_i & x\geq b_{g_i},\end{array}\right.\]
     where 
    \[b_{g_i}=\frac{\gamma K_i}{\gamma-1}.\]
Denote $a_{i}$ the point where $V^{h_i}(a_i) = g_i(a_i)$, $i=1,2$. 
Furthermore, we assume that 
\[a_2< a_1<b_{g_2}\]
and $p_1 \leq p_2$.
     Since we also have $b_{g_2}\leq b_{g_1}$, Player~1 is then naturally inclined to 
wait longer than Player~2. Therefore, a natural Ansatz for the equilibrium value of 
Player $1$ is 
         \[u_1(x,p_1)= (1-p_1)V^{g_1}(x)+p_1 V^{h_1}(x).\]
Define a boundary function 
\[b(x):=\left\{\begin{array}{cl}
0 & x\geq b_{g_1}\\
\frac{V^{g_1}(x)-g_1(x)}{V^{g_1}(x)-V^{h_1}(x)} & a_1<x<b_{g_1}\\
1 & x \leq a_1.\end{array}\right.\]
Then $u_1(x,p_1)\geq g_1(x)$ for all $x$ with $p_1\leq b(x)$. Define a process
    \[\Gamma^2_t:=\left\{\begin{array}{cl}
\frac{p_1-p_1\wedge \inf_{0\leq s\leq t} b(X_s)}{p_1(1-p_1\wedge \inf_{0\leq s\leq t} b(X_s))} 
& t\leq \tau_{g_2}\\
\Gamma^2_{\tau_{g_2}} & \tau_{g_2}<t<\tau_{g_1}\\
1 & t\geq \tau_{g_1}\end{array}\right.\]
as a candidate strategy for Player $2$, and let 
\[u_2(x,p_1):=\left\{\begin{array}{cl}
V^{g_2}(x) & p_1=0\\
\frac{g_2(b^{-1}(p_1))}{\psi(b^{-1}(p_1))}\psi(x) &
0<p_1\leq b(x)\\
g_2(x) & p_1>b(x).\end{array}\right.\]
We have
\begin{eqnarray*}
M^1_t &:=& e^{-rt} (1-p_1\Gamma^{2}_t) u_1(X_{t},\Pi^1_{t})+ p_1\int_{[0,t]} e^{-rs} V^{h_1}(X_s)\,d\Gamma^{2}_s\\
&=& e^{-rt}(1-p_1)V^{g_1}(X_t)+e^{-rt}p_1(1-\Gamma^2_t)V^{h_1}(X_t)+p_1\int_{[0,t]}e^{-rs}V^{h_1}(X_s)\,d\Gamma^2_s,
\end{eqnarray*}
which is a martingale on $[0, \tau_{g_1}]$ and a supermartingale on $[0,\infty)$. Similarly, for an arbitrary $\Gamma^1$ we have
\begin{eqnarray*}
M^2_t &:=& e^{-rt} (1-p_2\Gamma^{1}_{t-}) u_2(X_{t},\Pi^1_{t-})+ p_2\int_{[0,t)} e^{-rs} V^h(X_s)\,d\Gamma^{1}_s,
\end{eqnarray*}
so if we define $\Gamma_t^1$ by letting $\Gamma_0^1=0$, $\Gamma_{\tau_{g_2}}=1$ and
\begin{eqnarray*}
    p_2(V^{h_2}(X_t)-u_2(X_t,\Pi_t^1))d\Gamma_t^1- p_1(1-p_1)\frac{\partial u_2}{\partial p_1} (X_t,\Pi_t^1)\frac{1-p_2\Gamma_t^1}{(1-p_1\Gamma_t^2)^2}d\Gamma_t^2  = 0
\end{eqnarray*}
on $(0,\tau_{g_2})$, then $M^2$ is a martingale on $[0,\tau_{g_2}]$ and a supermartingale on $[0,\infty)$. By the verification result, the pair $(\Gamma^1,\Gamma^2)$ is a Nash equilibrium,
and the corresponding equilibrium values are given by $u_1$ and $u_2$.
\end{example}

The above solution method breaks down for most asymmetric games. 
We illustrate this in the following example.


\begin{example}\label{ex5.2}
\label{put-call}
Consider the case where Player $1$ and Player $2$ have a put and a call option as their payoffs, respectively, and with no consolation, i.e. $g_1(x) = (K_1-x)^+$, $g_2(x) = (x-K_2)^+$ and $h_1=h_2 = 0$, with $K_1<K_2$. 
Then, clearly, neither player has an incentive to wait longer than their opponent; instead, if 
$X$ goes sufficiently below $K_1$ then Player~1 would stop, and if $X$ goes sufficiently above $K_2$ then Player~2 would stop.

In view of this, one expects a lower boundary surface $\{x=L(p_1,p_2)\}$ and an upper boundary surface $\{x=U(p_1,p_2)\}$ such that
Player $1$ exercises their put option with some (generalized) intensity on $L$ so that $(X_t,\Pi^1_t, \Pi^2_t)$ reflects in $L$, while Player $2$ does nothing, and Player $2$ exercises their call option with some intensity on $U$ so as to reflect $(X_t,\Pi^1_t, \Pi^2_t)$ in $U$ while Player $1$ does nothing.
Assuming that $X$ is a geometric Brownian motion as in previous examples, we then have candidate equilibrium values given by 
%
\begin{align*}
\begin{cases}
u_1(x,p_1,p_2) = C_1 (p_1,p_2)\psi(x) + D_1 (p_1,p_2) \phi(x) ,\\
u_2(x,p_1,p_2) = C_2 (p_1,p_2) \psi(x) + D_2 (p_1,p_2) \phi(x) ,
\end{cases}
\end{align*}
where  $\psi = x^{\gamma}$ and $\phi=x^{\eta}$, with $\gamma>1$ and $\eta<0$ being the solutions of the quadratic equation \eqref{gamma}.
Clearly, boundary conditions $u_1=g_1$ and $u_2=g_2$ should be imposed on $L$ and on $U$, respectively.
Moreover, on 
$\{x = L(p_1, p_2)\}$, martingality of $M^1$ and $M^2$ requires that
\begin{align*}
\begin{cases}
     \frac{\partial u_1}{\partial p_2}= 0,\\
    (1-p_2)\frac{\partial u_2}{\partial p_2}+u_2 = 0.
\end{cases}
\end{align*}
Similarly, on 
$\{x=U(p_1,p_2)\}$, martingality of $M^2$ requires
\begin{align*}
\begin{cases}
     \frac{\partial u_2}{\partial p_1}= 0,\\
    (1-p_1)\frac{\partial u_1}{\partial p_1}+u_1 = 0.
\end{cases}
\end{align*}
Now, there are six boundary conditions imposed, and there are six unknown functions 
$C_1$, $D_1$, $C_2$, $D_2$, $L$ and $U$, and it is possible to derive a pair of candidate equilibrium values $u_1$ and $u_2$. In the interest of brevity, however, we refrain from 
doing so.
\end{example}

\begin{rem}
In Example~\ref{ex5.1}, a candidate solution is constructed (and verified) using the intuition that in equilibrium the players should increase their controls $\Gamma^1$ and $\Gamma^2$ simultaneously, with a relative weight specified so that martingality is obtained.
On the other hand, Example~\ref{ex5.2} describes a situation in which the players should {\em not} increase their controls simultaneously, and the boundary conditions for the candidate value functions given by martingality are thus specified accordingly. 

In general, one would hope to formulate a variational inequality which produces a unique pair of candidate value functions, with the correct boundary conditions along pieces of the boundary where both players increase their controls and along pieces where only one player does so. The exact formulation of such a variational problem, along with related existence and uniqueness questions, is currently not available.
\end{rem}

	\bibliographystyle{amsplain}
	\bibliography{ghost}

\end{document}